\newtheorem{theorem}{Theorem}[section]
\newtheorem{lem}[theorem]{Lemma}
\theoremstyle{definition}
\newtheorem{definition}{Definition}[section]
\newtheorem{Coro}{Corollary}[section]
\theoremstyle{remark}
\newtheorem{Remark}{Remark}[theorem]
\title{On the Bj\"orling problem for Born-Infeld solitons}
\author{ Sreedev Manikoth\thanks{BS-MS Programme, Indian Institute of Science, Education and Research, Pune, Maharashtra, India}}
\begin{document}

\maketitle
\begin{abstract}
  The Bj\"orling problem and its solution is a well known result for minimal surfaces in Euclidean three-space. The minimal surface equation is similar to the Born-Infeld equation, which is naturally studied in physics. In this semi-expository article, we ask the question of the  Bj\"orling problem for  Born-Infeld solitons. This begins with the case of locally Born-Infeld soliton surfaces and later moves on to graph-like surfaces. We also present some results about their representation formulae.
\end{abstract}
\section{Introduction}

In general zero-mean curvature surfaces are called minimal surfaces. The theory of minimal surfaces in Euclidean 3-space is well studied with several results. Any non-parametric minimal surface $(x, y, \psi(x,y))$ satisfies the minimal surface equation,

$$(1+\psi_y^2)\psi_{xx}-2 \psi_x \psi_y \psi_{xy}+(1+\psi_x^2)\psi_{yy} =0$$.

This equation is similar to the Born-Infeld equation,

$$(1-\psi_y^2)\psi_{xx}+2 \psi_x \psi_y \psi_{xy}-(1+\psi_x^2)\psi_{yy} =0$$

This motivates us to ask similar questions about the  Born-Infeld solitons. In particular one can ask, can we find an analog of Weierstrass-Ennepper representation formulae for the Born-Infeld solitons?  This was answered by Barbishov and Chernikov, which we would shortly see in this article. We can also ask for an analog of the Bj\"orling problem. In this article we give an answer to that question. We start with the definition of locally Born-Infeld soliton surfaces.

\begin{definition}[Born-Infeld soliton general surfaces]
A  surface is said to be a Born-Infeld soliton general surface if it is locally of the form $(\psi(y,z),y,z)$ , $(x,\psi(x,z),z)$ or $(x,y,\psi(x,y))$ where $\psi$ solves the Born-Infeld equation.
\end{definition}

in \cite{Rr}, R. Dey and R.K. Singh showed that timelike minimal graphs over $y-z$ plane have Born-Infeld solitons as height function. We show that the same result holds for timelike minimal graphs over the $x-z$ plane. We also prove that any timelike minimal surface without singularities is locally a graph over $x-z$ or $y-z$ plane. Thus we conclude timelike minimal surfaces without singularities are Born-Infeld general surfaces.

We use the above result to solve Bj\" orling problem for Born-Infeld general surfaces. Bj\" orling problem for timelike minimal surfaces has already been solved in \cite{Rb} and \cite{Yw}. For regular space or timelike curves, one gets a timelike minimal surface without singularities as the solution to the  Bj\" orling problem. This implies that for regular space or timelike curves Bj\" orling problem for Born-Infeld general surfaces can be solved.

In \cite{Ea} E.A Paxton showed that any compact subset of a global properly immersed timelike minimal surface is a timelike minimal graph over some timelike plane. We generalize the result in \cite{Rr} to show timelike minimal graphs over any timelike plane have Born-Infeld soliton as height function. Moreover, we see that for regular real analytic curve $c$ and vector field $n$ which have entire split-holomorphic functions as an analytic extension we get a global properly immersed timelike surface as the solution to the Bj\"orling problem. So solving timelike Bj\"orling problem for such real analytic strips and looking at their compact subsets is a way to find Born-Infeld solitons. In particular, we note that for real analytic strips of regular real-analytic  curves and unit vector fields $(c,n)$ with components as polynomials in $t$, one can use the above result to find Born-Infeld solitons.

 Graph-like Born-Infeld solitons are of special interest to physicists and we try to ask a similar question for them. From \cite{Rr} we note that spacelike minimal graphs and timelike minimal graphs over $y-z$ plane are Born-Infeld soliton graphs. We use this idea and ask for what kind of real analytic strips $(c,n)$, do we get a spacelike minimal graph or timelike minimal graph as a solution to the Bj\"orling problem. We characterize such curves and normals for which graphical solutions can or cannot be found.

Lastly, we go through the representation formulae given by Barbishov and Chernikov. We show that the Barbishov and Chernikov representation formula, like the Weierstrass-Enneper representation, fails  at zero Gauss curvature points.  In \cite{Lm} L. McNertney showed that any surface in $\mathbb{L}^3$ which can be expressed as the sum of two lightlike curves with linearly independent velocities is timelike minimal.  We see that  the Barshiov and Chernikov representation formula also expresses the surface as a sum of two lightlike curves with linearly independent velocities, implying these Born-Infeld solitons are timelike minimal in $\mathbb{L}^3$. Also, normal vector fields of these surfaces parametrized by $r-s$ coordinates are the same.

We note that A. Das also in \cite{Ar} independently solved the Bj\"orling problem for  Born-Infeld solitons, $X(\omega(t))=c(t)$,$N(\omega(t))=n(t)$ where $\omega(t)$ is a curve in $r-s$ plane determined by $c$ and $n$. They also shows that Bj\"orling problem for Born-Infeld solitons may not have unique solutions.

Our results here are dependent on plenty of earlier work done by several mathematicians. We would have a look at those as we go through them. 
.

As a summary, we aim to describe three results. We would first prove that any timelike minimal surface without singularities is locally  the graph of a Born-Infeld soliton over $y-z$ or $x-z$ plane. This answers the Bj\"orling problem for surfaces that are locally Born-Infeld solitons, in the special case when the curve is assumed to be regular. The third section is about some corollaries of E.A Paxton's results (\cite{Ea}). In the fourth section, we  deal with the Bj\"orling problem of surfaces that are globally Born-Infeld solitons and present some results. In the last few sections, we study a special class of Born-Infeld solitons, given by Barbishov and Chernikov and we would prove some theorems about them.

Throughout this article we will be using the following definition for the Lorentz-Minkowski space, $\mathbb{L}^3$.

\begin{definition}
$\mathbb{L}^3$ is $\mathbb{R}^3$ with the metric $ds^2=dx^2+dy^2-dz^2$
\end{definition}
\section{Regular timelike minimal surfaces and Born-Infeld solitons}

We will start with defining Born-Infeld solitons.

\begin{definition}[Born-Infeld soliton]
Let $\Omega \subset $ $\mathbb{R}^2$ be an open subset. Let $(u,v)$ $\in$ $\Omega$. Now we will denote this subset by $\Omega_{(u,v)}$.A map $\phi$: $\Omega_{(u,v)}$ $\rightarrow$ $\mathbb{R}$ is said to be a Born-Infeld soliton if it solves the Born-Infeld equation in the variables $u,v$. 
\end{definition}

First we will show a lemma about timelike minimal graphs over $x-y$ plane.

\begin{lem}
Any timelike minimal graph $X(x,y)=(x,y,\phi(x,y))$ without singularities is locally is a graph of the form $(x,\psi(x,z),z)$ or $(\psi(y,z),y,z)$ \medskip for some Born-Infeld soliton $\psi$.
\end{lem}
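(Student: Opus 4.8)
The plan is to combine a first fundamental form computation with the implicit function theorem and the reparametrisation-invariance of minimality. First I would record the induced metric of $X(x,y)=(x,y,\phi(x,y))$ in $\mathbb{L}^3$: from $X_x=(1,0,\phi_x)$ and $X_y=(0,1,\phi_y)$ one gets $E=1-\phi_x^2$, $F=-\phi_x\phi_y$, $G=1-\phi_y^2$, so $EG-F^2=1-\phi_x^2-\phi_y^2$. Since the graph is timelike (and has no singular, i.e.\ degenerate, points) its induced metric is everywhere Lorentzian, $EG-F^2<0$, which forces the strict inequality $\phi_x^2+\phi_y^2>1$. In particular the gradient $(\phi_x,\phi_y)$ never vanishes, so at each point at least one of $\phi_x,\phi_y$ is nonzero.

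This nonvanishing is exactly what lets me change the base plane. The projection $(x,y)\mapsto(x,\phi(x,y))$ onto the $x$-$z$ plane has Jacobian determinant $\phi_y$, so on the open set where $\phi_y\neq 0$ the implicit function theorem solves $z=\phi(x,y)$ for $y$, giving $y=\psi(x,z)$ and reparametrising the surface as the graph $(x,\psi(x,z),z)$. Symmetrically, the projection $(x,y)\mapsto(y,\phi(x,y))$ onto the $y$-$z$ plane has Jacobian $-\phi_x$, so where $\phi_x\neq 0$ I obtain $x=\psi(y,z)$ and the graph $(\psi(y,z),y,z)$. Because $\phi_x^2+\phi_y^2>1$, these two open sets cover the whole domain, so locally the surface is always one of the two claimed graphs; each reparametrising map is a local diffeomorphism, so the new graph is again a nonsingular immersion.

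It remains to identify the height function $\psi$ as a Born-Infeld soliton. The cleanest route is to observe that being timelike and having zero mean curvature are properties of the surface as a subset of $\mathbb{L}^3$ and so are preserved by the diffeomorphic change of parameters above; hence $(x,\psi(x,z),z)$ and $(\psi(y,z),y,z)$ are genuine timelike minimal graphs over the $x$-$z$ and $y$-$z$ planes. I would then invoke the result of Dey and Singh \cite{Rr} for graphs over the $y$-$z$ plane, together with its analogue for the $x$-$z$ plane, to conclude that $\psi$ solves the Born-Infeld equation in its two variables. Alternatively one can verify this by hand: differentiating $\phi(x,\psi(x,z))=z$ gives $\psi_x=-\phi_x/\phi_y$ and $\psi_z=1/\phi_y$, and after computing the second derivatives one substitutes into the Born-Infeld operator.

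The step I expect to be the main obstacle is exactly this last identification, because the zero mean curvature equation does not keep the same shape when the base plane changes. A direct computation shows that the graph over the $x$-$y$ plane satisfies $(1-\phi_y^2)\phi_{xx}+2\phi_x\phi_y\phi_{xy}+(1-\phi_x^2)\phi_{yy}=0$, whereas the Born-Infeld equation carries the asymmetric term $-(1+\psi_x^2)\psi_{yy}$ rather than $+(1-\phi_x^2)\phi_{yy}$. One therefore has to check that sending the timelike height $z$ down into the base while promoting a spacelike coordinate to the height turns the former operator into the latter, with precisely the right signs; tracking where the Lorentzian minus sign sits is the delicate bookkeeping of the proof, and it is what makes the $x$-$z$ analogue of the Dey and Singh computation necessary rather than automatic.
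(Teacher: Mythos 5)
Your proposal is correct and follows essentially the same route as the paper's own proof: both derive $\phi_x^2+\phi_y^2>1$ from the timelike/nonsingular hypothesis (you via $EG-F^2=1-\phi_x^2-\phi_y^2<0$, the paper via the spacelike unit normal, which is the same fact), both then use the nonvanishing of $\phi_x$ or $\phi_y$ together with the implicit function theorem to rewrite the surface locally as a graph over the $x$-$z$ or $y$-$z$ plane, and both reduce the identification of the height function as a Born--Infeld soliton to the zero mean curvature computation in the style of Dey and Singh \cite{Rr}. Your closing remark that the $x$-$z$ case requires its own sign bookkeeping rather than being automatic matches the paper's (and its introduction's) treatment, so there is no gap to report.
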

\begin{proof}Here the Jacobian of the surface $X$ at a point p looks like this,

$$\begin{pmatrix}
1 & 0 \\
0 & 1 \\
 \phi_x(p) & \phi_y(p)
\end{pmatrix}$$

Thus note that $$\begin{pmatrix}
1 & 0 \\
 \phi_x(p) & \phi_y(p)
\end{pmatrix}$$  or $$\begin{pmatrix}
0 & 1 \\
 \phi_x(p) & \phi_y(p)
\end{pmatrix}$$  has rank 2 only if their determinants $\phi_x(p)$ or $\phi_y(p)$ is nonzero . In other words, our surface is always locally a graph over $x-z$ or $y-z$ plane if $\phi_x(p)$ or $\phi_y(p)$ is nonzero for any point p. This is always true for a timelike minimal graphs without singularities over $x-y$ plane as they satisfies $$\phi_x ^2(p)+\phi_y^2(p)>1$$ at all points as the  normal of X, $$N=\frac{(-\phi_x,-\phi_y,-1)}{\sqrt{|\phi_x^2+\phi_y^2-1|}} $$ is spacelike.  To show height function of local $x-z$ or $y-z$ graph is a Born-Infeld soliton, one can compute mean curvature and equate it to zero. This part is similar to  R. Dey and R.K. Singh's  proof of  height functions of timelike minimal graphs without singularities over $y-z$ planes are Born-Infeld solitons(in \cite{Rr} pages 528 to 530)
\end{proof}

Now we will use this lemma to  prove that timelike minimal surfaces without singularities are Born-Infeld soliton general surfaces.
\begin{theorem}
Any timelike minimal surface without singularities is locally a graph of a function over the x-z or y-z plane with their height function being a Born-Infeld soliton.
\end{theorem}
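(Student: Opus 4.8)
The plan is to reduce the general statement to the graph case already treated in the Lemma, by exploiting the fact that a timelike surface in $\mathbb{L}^3$ has a spacelike unit normal. First I would take an arbitrary timelike minimal surface without singularities and fix a local parametrization $X(u,v)=(X_1,X_2,X_3)$. Because the surface has no singularities, the Jacobian $dX$ has rank $2$ at every point, so at least one of its three $2\times 2$ minors is nonzero; these minors are, up to sign and a common scalar, exactly the three ambient components $N_1,N_2,N_3$ of the Lorentzian normal $N=X_u\times_L X_v$, with $N_1$ the minor of the $(X_2,X_3)$-projection, $N_2$ the minor of the $(X_1,X_3)$-projection, and $N_3$ the minor of the $(X_1,X_2)$-projection. (Solving the orthogonality conditions $\langle N,X_u\rangle=\langle N,X_v\rangle=0$ shows $(N_1,N_2,-N_3)$ is parallel to the Euclidean cross product $X_u\times X_v$, so the vanishing of each component coincides with the vanishing of the corresponding projection minor.)

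The key step is to observe that $N_1$ and $N_2$ cannot vanish simultaneously. Since the surface is timelike, its normal is spacelike, so $\langle N,N\rangle = N_1^2+N_2^2-N_3^2>0$. If we had $N_1=N_2=0$ at some point, this would force $-N_3^2>0$, which is impossible. Hence at every point either $N_2\neq 0$, in which case the inverse function theorem lets us write the surface locally as a graph $(x,\psi(x,z),z)$ over the $x$--$z$ plane, or $N_1\neq 0$, in which case it is locally a graph $(\psi(y,z),y,z)$ over the $y$--$z$ plane.

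It remains to check that in either case the height function $\psi$ is a Born-Infeld soliton. This is precisely the local computation underlying the preceding Lemma: writing out the mean curvature of a timelike graph over the $x$--$z$ (resp.\ $y$--$z$) plane and setting it equal to zero yields the Born-Infeld equation for $\psi$, exactly as in Dey and Singh's treatment of graphs over the $y$--$z$ plane. Alternatively, whenever a local patch happens to project diffeomorphically onto the $x$--$y$ plane one may simply invoke the Lemma directly on that patch to obtain the desired $x$--$z$ or $y$--$z$ graph with Born-Infeld height function; combined with the dichotomy above, every point of the surface is therefore covered.

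The main obstacle is essentially bookkeeping rather than analysis: one must correctly match the ambient components of the Lorentzian normal to the projection minors (the sign in the third slot coming from the metric $dx^2+dy^2-dz^2$) and confirm that spacelikeness of the normal is indeed the correct characterization of a timelike surface. Once the identity $\langle N,N\rangle=N_1^2+N_2^2-N_3^2>0$ is in place, the dichotomy is immediate, and the Born-Infeld conclusion is inherited verbatim from the mean-curvature computation in the Lemma.
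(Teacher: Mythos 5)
Your proof is correct, but it takes a genuinely different route from the paper's. The paper first invokes the standard fact that a regular surface is locally a graph over one of the three coordinate planes, and then handles the only problematic case --- a graph $(x,y,\phi(x,y))$ over the $x$--$y$ plane --- via its preceding Lemma: spacelikeness of the normal forces $\phi_x^2+\phi_y^2>1$, so $\phi_x$ or $\phi_y$ is nonzero and the patch is also a graph over the $x$--$z$ or $y$--$z$ plane. You instead work directly with an arbitrary parametrization and its Lorentzian normal $N=X_u\times_L X_v$, identify the projection minors with the components $N_1,N_2,N_3$ (with the sign flip in the third slot coming from the metric), and observe that $\langle N,N\rangle_{\mathbb{L}^3}=N_1^2+N_2^2-N_3^2>0$ makes it impossible for $N_1$ and $N_2$ to vanish simultaneously, so the inverse function theorem applies at every point with no case analysis. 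This is in substance the argument of R.~L\'opez (Proposition 3.3 of \cite{Rl}) that the paper itself records in the Remark following the theorem as an alternative proof. Your version is more unified: no detour through $x$--$y$ graphs, and spacelikeness of the normal is used once, for an arbitrary patch, rather than inside a lemma about one special parametrization; the paper's version buys concreteness (everything is phrased in terms of $\phi_x,\phi_y$ of an explicit graph) and reuses a lemma it had already proved. Both arguments finish identically, quoting the Dey--Singh mean-curvature computation to conclude that the height function of a timelike minimal graph over the $x$--$z$ or $y$--$z$ plane satisfies the Born--Infeld equation.
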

\begin{proof}
Any regular timelike minimal surface is  locally a graph and it is of the form $(x,y,\psi(x,y))$, or $(x,\psi(x,z),z)$ or  $(\psi(y,z),y,z)$. By lemma 2.1, we note that the timelike $x-y$ graph without singularities is also locally a $y-z$ or $x-z$ graph. Using zero-mean curvature condition  one can conclude that height function must be a Born-Infeld soliton.
\end{proof}
\begin{Remark}
Alternatively in \cite{Rl} proposition 3.3(page 75), R. Lopez proved that any timelike minimal surface is locally a graph over $x-z$ or $y-z$ plane by noting that components of the normal are Jacobian of the map from $y-z$,$x-z$ and $x-y$ plane into the image of the surface.   
\end{Remark}

In theorem 3.3 of \cite{Yw}(Page 1091),  Y.W Kim, S.E Koh and S-E Yang proved that if $c$ is a regular spacelike or timelike curve, then there is a timelike minimal surface without singularities solving the Bj\"orling problem. This gives us the following result.

\begin{Coro}
If $c$ is a regular real analytic spacelike or timelike curve and $n$ a real analytic spacelike normal vector field, then there exists a Born-Infeld soliton general surface which solves the Bj\"orling problem.
\end{Coro}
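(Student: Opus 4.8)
The plan is to realize the desired surface as the solution of the timelike Bj\"orling problem for the strip $(c,n)$ and then to recognize it, via Theorem 2.2, as a Born-Infeld soliton general surface. The proof is thus a two-step composition with essentially no computation of its own: existence of a smooth timelike minimal Bj\"orling surface, followed by the structural identification already proved in this section.

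First I would invoke Theorem 3.3 of \cite{Yw}. Since $c$ is a regular real analytic spacelike or timelike curve and $n$ is a real analytic spacelike unit normal field along $c$ (so that $(c,n)$ is an admissible Bj\"orling datum with $\langle c',n\rangle=0$ and $c'$ nowhere vanishing), the real analyticity is exactly what allows the construction to proceed: one extends $c$ and $n$ to split-holomorphic maps of the parameter and integrates to produce a timelike minimal immersion $X$ that restricts to $c$ along the curve and whose normal restricts to $n$. The stated hypotheses---regularity of $c$ and spacelikeness of $n$---are precisely those under which \cite{Yw} guarantees that this solution carries no singularities, so $X$ is a timelike minimal surface without singularities.

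Next I would apply Theorem 2.2 directly to $X$. By that theorem a timelike minimal surface without singularities is locally a graph over the $x$-$z$ or $y$-$z$ plane whose height function is a Born-Infeld soliton; that is, $X$ is locally of the form $(x,\psi(x,z),z)$ or $(\psi(y,z),y,z)$ with $\psi$ solving the Born-Infeld equation. This is exactly the defining property of a Born-Infeld soliton general surface, so $X$ is such a surface and it solves the Bj\"orling problem for $(c,n)$, which is what the corollary asserts.

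The step that genuinely requires care is not any estimate but the matching of hypotheses across the two ingredients. I would verify that the no-singularities output of the Bj\"orling construction in \cite{Yw} coincides with the \emph{without singularities} hypothesis of Theorem 2.2: in the present Lorentzian setting both amount to the normal of $X$ remaining spacelike, equivalently to the graph condition $\phi_x^2+\phi_y^2>1$ appearing in Lemma 2.1. Once this identification is confirmed, the two results chain together with no further work, since the content of the corollary lies entirely in the interface between the existence theorem and Theorem 2.2 rather than in any new argument.
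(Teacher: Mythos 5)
Your proposal is correct and takes essentially the same route as the paper: the paper obtains the corollary by citing Theorem 3.3 of \cite{Yw} for a timelike minimal surface without singularities solving the Bj\"orling problem, and then identifies it as a Born-Infeld soliton general surface via its theorem on regular timelike minimal surfaces (your Theorem 2.2 reference). Your extra remark on matching the ``without singularities'' hypotheses is a sensible check, but it is the same argument, not a different one.
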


Another corollary of theorem 2.1 is the following.

\begin{Coro}
Let $c:I \rightarrow \mathbb{L}^3$ be a regular timelike curve in $\mathbb{L}^3$ such that $c^{''}(t)$ is spacelike for all $t \in I$. Then there exist a Born-Infeld soliton general surface containing c as geodesic(by a geodesic here, we mean a curve such that principal normal agreeing with surface normal in $\mathbb{L}^3$).

\end{Coro}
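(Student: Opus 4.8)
The plan is to reduce the statement to the Björling solution of Corollary 2.1 by choosing the prescribed normal field to be the unit principal normal of $c$. First I would reparametrize $c$ by proper time, so that $\langle c',c'\rangle=-1$; this reparametrization is real analytic and preserves regularity, and differentiating the relation $\langle c',c'\rangle=-1$ gives $\langle c'',c'\rangle=0$, so the acceleration is automatically orthogonal to the tangent. This orthogonality is exactly what is needed for the acceleration direction to serve as a surface normal along $c$.

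Next I would check that the acceleration remains spacelike after this reparametrization, so that the candidate normal is admissible as Björling data. Writing $v=\sqrt{-\langle c',c'\rangle}$ for the speed in the original parameter, a short computation relating $c''$ to the proper-time acceleration $\ddot c$ yields $\langle\ddot c,\ddot c\rangle=v^{-4}\bigl(\langle c'',c''\rangle+(v')^2\bigr)$. Since the hypothesis guarantees $c''$ is spacelike, the right-hand side is strictly positive, so $\ddot c$ is spacelike and nonzero. I would then set $n=\ddot c/\sqrt{\langle\ddot c,\ddot c\rangle}$. By construction $n$ is a real analytic, spacelike, unit vector field along $c$ that is orthogonal to the tangent $\dot c$, hence a legitimate spacelike normal field for the Björling problem.

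With the strip $(c,n)$ in hand, I would invoke Corollary 2.1 applied to the regular real analytic timelike curve $c$ with real analytic spacelike normal $n$ (here I take $c$ to be real analytic, as is implicit in any appeal to the Björling construction). This produces a Born-Infeld soliton general surface $S$ that contains $c$ and has $n$ as its surface normal along $c$. Finally, because $n$ was chosen to be the principal normal of $c$, the surface normal of $S$ agrees with the principal normal all along $c$; by the definition of geodesic adopted in the statement this means $c$ is a geodesic of $S$, which completes the argument.

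The main obstacle I anticipate is not a single hard estimate but the bookkeeping needed to certify admissibility of $n$: one must confirm that the principal normal is genuinely spacelike (handled by the displayed identity together with the hypothesis on $c''$), orthogonal to $c'$ (handled by the proper-time normalization), and real analytic (inherited from $c$). Once these are in place, the passage from "principal normal agrees with surface normal" to the Björling normal data is purely definitional and requires no further computation beyond the identification $n=\ddot c/|\ddot c|$.
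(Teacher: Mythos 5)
Your proposal is correct and takes essentially the same route as the paper: reparametrize the curve to constant speed and solve the timelike Bj\"orling problem with the unit principal normal as the prescribed normal field, then use the fact that the resulting regular timelike minimal surface is a Born-Infeld soliton general surface. The only difference is one of packaging --- the paper delegates the ``curve as geodesic'' construction to Corollary 3.2 of the cited Chaves--Dussan--Magid paper, whereas you derive it explicitly from Corollary 2.1, including the correct verification $\langle\ddot c,\ddot c\rangle=v^{-4}\bigl(\langle c'',c''\rangle+(v')^2\bigr)>0$ that the proper-time acceleration stays spacelike.
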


\begin{proof}
Here since our curve is regular we can give it arc length parametrization which has a constant speed. Now we refer to corollary 3.2, page 489 of \cite{Rb}. We also note that for a regular curve we get a timelike minimal surface without singularities as solutions to the Bj\"orling problem which is also a Born-Infeld soliton general surface.
\end{proof}

\section{On compact subsets of timelike minimal surfaces}
In this section, we get some corollaries of theorem 1.1 in \cite{Ea}. We would first prove a lemma.

\begin{lem}

Let $X:\Omega \rightarrow \mathbb{R}^3$ be a timelike minimal surface which is a smooth graph over a timelike plane $P$. Choose a orthonormal basis $\{b_2,b_3\}$ with respect to  $\langle , \rangle_{\mathbb{L}^3}$ for the plane $P$, with $b_2$ a spacelike vector ,$b_3$ a timelike vector . Also, let $b_1=N$ be the unit spacelike surface normal of the timelike plane $P$.Then $\{b_1,b_2,b_3\}$ forms a orthonormal basis of $\mathbb{L}^3$. For any $x_2 b_2+x_3 b_3$ in P, we can consider $$\psi(x_2,x_3)=\langle X,N \rangle_{\mathbb{L}^3}.$$ Then such a $\psi$ satisfies the Born-Infeld equation in variables $x_2,x_3$ and thus is a Born-infield soliton.
\end{lem}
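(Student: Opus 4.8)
The plan is to reduce the statement to the already-known case of graphs over the $y$-$z$ plane by applying a linear isometry of $\mathbb{L}^3$ that carries the adapted frame $\{b_1,b_2,b_3\}$ to the standard frame. First I would observe that, since $b_1$ and $b_2$ are unit spacelike, $b_3$ is unit timelike, and the three are mutually $\langle\,,\rangle_{\mathbb{L}^3}$-orthogonal, the matrix $M$ whose columns are $b_1,b_2,b_3$ (written in standard coordinates) satisfies $M^{\mathsf T}\eta M=\eta$, where $\eta=\mathrm{diag}(1,1,-1)$. Hence $M\in O(2,1)$, and the map $v\mapsto M^{-1}v$ is a linear isometry $\Phi$ of $\mathbb{L}^3$. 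Passing to the coordinates $(x_1,x_2,x_3)$ determined by $\Phi$, the metric again takes the standard form $dx_1^2+dx_2^2-dx_3^2$, the spacelike normal $N=b_1$ becomes the first coordinate axis, and the timelike plane $P$ becomes the coordinate $x_2$-$x_3$ plane (with $x_2$ spacelike and $x_3$ timelike), which is isometric to the usual $y$-$z$ plane.

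Next I would rewrite the surface in these coordinates. Since $\{b_1,b_2,b_3\}$ is orthonormal, the $b_1$-component of $X$ is exactly $\langle X,b_1\rangle_{\mathbb{L}^3}=\langle X,N\rangle_{\mathbb{L}^3}=\psi$, so that $X=\psi\,b_1+x_2 b_2+x_3 b_3$. Because $X$ is assumed to be a graph over $P$, each point of $P$ (labelled by $(x_2,x_3)$) has a unique point of the surface above it, so $\psi$ is a well-defined function of $(x_2,x_3)$, and in the new coordinates the surface is precisely the graph $(\psi(x_2,x_3),x_2,x_3)$ over the $x_2$-$x_3$ plane.

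Finally I would invoke the invariance of the relevant geometry under $\Phi$: linear isometries of $\mathbb{L}^3$ preserve both the causal character of a surface and its mean curvature, so the transformed surface is again a timelike minimal graph, now over a plane isometric to the $y$-$z$ plane and with height function $\psi$. The result of R. Dey and R.K. Singh in \cite{Rr} then applies verbatim: the height function of a timelike minimal graph over the $y$-$z$ plane solves the Born-Infeld equation in those two variables. Reading this back, $\psi(x_2,x_3)$ is a Born-Infeld soliton in the variables $x_2,x_3$, as claimed.

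The main obstacle I anticipate is the bookkeeping required to make the reduction rigorous rather than the geometry itself: one must verify that the coordinate frame $(x_1,x_2,x_3)$ is genuinely orthonormal of the correct signature, so that the metric is in standard form and the conclusion of \cite{Rr} is literally applicable, and that the intrinsically defined height $\langle X,N\rangle_{\mathbb{L}^3}$ coincides with the graph height in the rotated coordinates. If one preferred to avoid the isometry argument altogether, the alternative is a direct computation of the first and second fundamental forms of $X=\psi\,b_1+x_2 b_2+x_3 b_3$ followed by setting the mean curvature to zero, mirroring the computation on pages 528--530 of \cite{Rr}; the isometry approach is cleaner precisely because it reuses that computation instead of repeating it.
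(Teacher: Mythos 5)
Your proposal is correct, and it finishes the argument by a genuinely different route than the paper. The first half coincides: the paper also uses the graph hypothesis (the projection $\pi: X(\Omega)\to P$ being a diffeomorphism onto its image) to reparametrize the surface as $X(x_2,x_3)=\psi(x_2,x_3)N+x_2 b_2+x_3 b_3$ with $\psi=\langle X,N\rangle_{\mathbb{L}^3}$. But the paper then concludes by asserting that one computes the mean curvature in this new parametrization and equates it to zero, i.e.\ it (re-)derives the Born--Infeld equation by the same kind of computation as in \cite{Rr}, carried out in the adapted frame. You instead note that the frame matrix $M=(b_1\ b_2\ b_3)$ satisfies $M^{\mathsf T}\eta M=\eta$ with $\eta=\mathrm{diag}(1,1,-1)$, so $M\in O(2,1)$, and you conjugate by the ambient linear isometry $M^{-1}$ to turn the surface into a standard graph $(\psi(x_2,x_3),x_2,x_3)$ over the $y$-$z$ plane; since linear isometries preserve timelikeness and minimality, the theorem of R.~Dey and R.K.~Singh then applies verbatim. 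What your route buys is that no curvature computation is needed at all --- the known result is literally reused rather than repeated --- and it makes explicit why the computation in the adapted frame must come out the same, namely the $O(2,1)$-invariance of the whole problem. What the paper's route buys is that it stays in the original coordinates and needs no verification that $M$ is a Lorentz transformation or that the frame coefficients $(x_2,x_3)$ become the new coordinates after applying $M^{-1}$; the cost is that its key step is left as a computation \emph{similar to} the one in \cite{Rr} rather than an actual citation of it. Both proofs are sound, and yours is arguably the cleaner formalization of the same underlying geometry.
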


\begin{proof}
Since $X$ is a smooth graph over plane $P$ we know that the projection map,  $$\pi: X(\Omega) \rightarrow R^2$$ with $\pi(p)$ being the  projection of the point $X(p)$  onto the plane $P$=span$\{b_2,b_3\}$  is a diffeomorphism. Let $\Sigma \subset P$ be the image of this map. Using $$\phi=\pi^{-1} : \Sigma \rightarrow X(\Omega) $$ we get a graph $X(x_2,x_3)=\psi(x_2,x_3)N+x_2 b_2+x_3 b_3$ with $\psi(x_2,x_3)=\langle X(x_2,x_3),N \rangle_L$. Here  $X(x_2,x_3) :P \rightarrow \mathbb{R}^3$  is  $X \circ \phi $. To show that such a map $\psi(x_2,x_3)$ satisfies the Born-infield equation, one can compute the mean curvature in this new parametrization and equate it to zero.
\end{proof}
Now we state the main result of this section.
\begin{theorem}
If  $(c,n)$ is a smooth real analytic strip with the properties that, 
\begin{itemize}
  \item $c$, a  regular curve 
  \item $c$ and $n$ have entire split-holomorphic functions as analytic extension
\end{itemize}

then any compact subset of the timelike minimal surface solving this Bj\"orling problem is a timelike minimal graph over some timelike plane with height function a Born-Infeld soliton.
\end{theorem}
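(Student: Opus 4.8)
The plan is to realise the theorem as a composition of three results that are already available: the split-complex Bj\"orling representation for timelike minimal surfaces of \cite{Rb} and \cite{Yw}, Paxton's structure theorem (Theorem 1.1 of \cite{Ea}), and Lemma 3.1 just proved. The hypotheses on the strip $(c,n)$ are tailored precisely so that each of these can be invoked in turn, so the work is mostly in verifying the hypotheses of the middle step.

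First I would recall the representation formula. Since $(c,n)$ is a real analytic strip and $c$ is regular, Theorem 3.3 of \cite{Yw} produces a timelike minimal surface without singularities solving the Bj\"orling problem, obtained by analytically continuing $t \mapsto c(t)$ and $t \mapsto n(t)$ to split-holomorphic functions $c(z)$, $n(z)$ of a split-complex variable $z = u + jv$ with $j^{2}=1$, and then inserting these into a contour-integral formula analogous to the Euclidean Bj\"orling formula. A priori this only defines the surface on a neighbourhood of the curve. The role of the hypothesis that $c$ and $n$ admit \emph{entire} split-holomorphic extensions is exactly to remove this restriction: the representation formula is then meaningful on the whole split-complex plane $\mathbb{R}^{2}$, so the resulting immersion $X$ is globally defined. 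I would use the null-coordinate decomposition of entire split-holomorphic functions (such a function splits into a piece depending only on $u+v$ and a piece depending only on $u-v$) to exhibit $X$ as a sum of two lightlike curves, which both makes the globality transparent and anticipates the later discussion of \cite{Lm}.

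Next I must check that this globally defined $X$ is a \emph{global properly immersed} timelike minimal surface, since that is the precise hypothesis of Paxton's theorem. Minimality and the timelike character are inherited from the local solution, and completeness of the parameter domain $\mathbb{R}^{2}$ supplies globality. The delicate point, which I expect to be the main obstacle, is properness: one must show that the preimage under $X$ of any compact set is compact, and along the way that the entire extension does not create points where the induced Lorentzian metric degenerates. Here I would lean on the growth and injectivity properties forced by the entire data (in the motivating case, polynomial data), arguing via the two lightlike generating curves that the image cannot accumulate over a compact region.

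With $X$ established as a global properly immersed timelike minimal surface, the remainder is a clean application. Theorem 1.1 of \cite{Ea} then says that every compact subset $K$ of $X$ is a timelike minimal graph over some timelike plane $P$. Finally I would feed this graph into Lemma 3.1: choosing the adapted orthonormal basis $\{b_{1}=N, b_{2}, b_{3}\}$ of $\mathbb{L}^{3}$ with $N$ the unit spacelike normal of $P$, and setting $\psi(x_{2},x_{3}) = \langle X, N\rangle_{\mathbb{L}^{3}}$, Lemma 3.1 shows that $\psi$ solves the Born-Infeld equation in the variables $x_{2},x_{3}$. Hence the compact subset is a timelike minimal graph over a timelike plane whose height function is a Born-Infeld soliton, which is exactly the assertion of the theorem.
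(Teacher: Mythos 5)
Your proposal follows essentially the same route as the paper's own proof: the entire split-holomorphic extensions make the Bj\"orling solution $X:\mathbb{R}^2\to\mathbb{R}^3$ of Theorem 3.3 of \cite{Yw} globally defined and non-singular, Paxton's Theorem 1.1 of \cite{Ea} then renders every compact subset a timelike minimal graph over some timelike plane, and Lemma 3.1 identifies the height function as a Born--Infeld soliton. The only difference is one of emphasis: you single out properness of the immersion as a point requiring verification (sketching an argument via the lightlike-curve decomposition), whereas the paper simply asserts that $X$ is properly immersed; this extra caution is warranted but does not alter the structure of the argument.
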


\begin{proof}
 We first note that since $c$ and $n$ have analytic extensions which are entire functions, $\Omega$ can be taken to be $\mathbb{R}^2$. Since $c$ is regular,  solution to Bj\" orling problem is a regular surface(we refer to theorem 3.3 of \cite{Yw}). Thus we have a smooth $X$: $\mathbb{R}^2$ $\rightarrow$ $\mathbb{R}^3$ which is a properly immersed surface, solving this Bj\"orling problem. Now by theorem 1.1 (page 3036) of \cite{Ea} any compact subset of this surface is a timelike graph over some timelike plane. Using lemma 3.1, we conclude that height function is a Born-Infeld soliton.
\end{proof}

In fact for any $M>0$, when restricted to a diamond $D_M$=$\{(u,v)||u|+|v| \leq  M\}$  We would still get a timelike graph over some timelike plane. This was used in the proof of theorem 1.1 (page 3036) of \cite{Ea}.

Now we state a special case of theorem 3.2.

\begin{Coro}
If  $(c,n)$ is a smooth real analytic strip with the properties that, 
\begin{itemize}
  \item $c$, a  regular curve 
  \item $c(t)$ and $n(t)$ have components as  polynomials in $t$  with real coefficients.

\end{itemize}

for any $M>0$, solution to the time like Bj\" orling problem when restricted to a diamond $D_M$=$\{(u,v)||u|+|v| \leq  M\}$ is a smooth graph over some timelike plane with height function being a Born-Infeld soliton.
\end{Coro}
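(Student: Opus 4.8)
The plan is to recognise Corollary 3.3 as a direct specialisation of Theorem 3.2, so that the whole task reduces to checking that the polynomial hypothesis on $(c,n)$ is subsumed by the split-holomorphic hypothesis of that theorem, and then invoking the diamond observation recorded immediately after its proof. First I would recall precisely what the Bj\"orling data must satisfy in Theorem 3.2: each component of $c(t)$ and of $n(t)$ must admit an extension to an \emph{entire} split-holomorphic function of the split-complex variable $w=u+jv$ with $j^2=+1$. Thus the only thing genuinely in need of verification is that a real polynomial in $t$ possesses such an extension.

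Next I would carry out that verification, which is the analytic content of the argument. Given a component $p(t)=\sum_{k=0}^{d} a_k t^{k}$ with $a_k\in\mathbb{R}$, I would define its extension by substituting the split-complex variable, $p(w)=\sum_{k=0}^{d} a_k w^{k}$. Since polynomials in $w$ automatically satisfy the split Cauchy--Riemann equations, $p(w)$ is split-holomorphic, and being a polynomial it is defined on the whole split-complex plane, hence entire. Passing to null coordinates $r=u+v$, $s=u-v$ through the idempotents $e_{\pm}=\tfrac{1}{2}(1\pm j)$ makes this concrete: because $e_{+}e_{-}=0$ and $e_{\pm}^{2}=e_{\pm}$ one has $w^{k}=r^{k}e_{+}+s^{k}e_{-}$, so $p(w)=p(r)e_{+}+p(s)e_{-}$, which is visibly entire and restricts to $p(t)$ on the diagonal $r=s=t$. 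Applying this component by component shows that $c$ and $n$ satisfy the hypotheses of Theorem 3.2.

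With the hypotheses in place I would conclude as follows. By Theorem 3.2 the domain of the Bj\"orling solution $X$ may be taken to be all of $\mathbb{R}^{2}$, and $X$ is a regular, singularity-free, properly immersed timelike minimal surface. For fixed $M>0$ the diamond $D_M=\{(u,v):|u|+|v|\le M\}$ is compact, so $X(D_M)$ is a compact subset of the surface; by the conclusion of Theorem 3.2 together with the diamond remark following it (which is exactly the form used in \cite{Ea}) this compact piece is a smooth graph over some timelike plane, and by Lemma 3.1 its height function $\psi=\langle X,N\rangle_{\mathbb{L}^3}$ solves the Born-Infeld equation. I do not expect a serious obstacle here: the substance is the bookkeeping that real polynomials are entire split-holomorphic and that $D_M$ is compact. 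The single point that merits care is confirming that the split-holomorphic extension promised by Theorem 3.2 coincides with the naive coefficient-wise extension above, so that the regularity and the full domain $\mathbb{R}^{2}$ really transfer; this follows from uniqueness of analytic extension but is worth stating explicitly.
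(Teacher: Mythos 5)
Your proposal is correct and follows essentially the same route the paper intends: the paper states this corollary as a direct special case of Theorem 3.2 (polynomials in $t$ with real coefficients extend, by substituting the split-complex variable, to entire split-holomorphic functions), combined with the diamond remark following that theorem and Lemma 3.1 for the Born-Infeld property of the height function. Your explicit verification via the idempotent decomposition $p(w)=p(r)e_{+}+p(s)e_{-}$ is a correct and slightly more detailed account of the step the paper leaves implicit.
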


\section{Bj\" orling problem for graph-like Born-Infeld solitons}

We first define graphical Born-Infeld solitons.
\begin{definition}[Born-Infeld soliton surface]
A surface $X$ is said to be a Born-Infeld soliton surface if it is of the form $X(y,z)=(\psi(y,z),y,z)$ for some Born-Infeld soliton .
\end{definition}

Let us recall the definition of positive quasidefinite matrix from \cite{Dh}.

\begin{definition}
A matrix $J$ is said to be positive quasidefinite if $$A=\frac{J+J^{T}}{2}$$ is positive definite.
\end{definition}
 We asked for what kind of real-analytic strips $(c,n)$ the Bj\"orling problem gives a graphical solution. We characterized set of real-analytic strips $(c,n)$ for which one gets a time-like minimal graph as solution to the Bjorling problem. Note that in the following result, we are using split-complex analysis instead of complex-analysis. Here $z=x+k^{'}y$ with $k^{{'}^{2}}=1$. We refer to \cite{Rb} for more about split-complex analysis. 

\begin{theorem}
Given a real analytic curve $c(t)=(c_1(t),c_2(t),c_3(t)) $ which is timelike or spacelike in $L^3$  and a real analytic spacelike  normal $n=(n_1(t),n_2(t),n_3(t))$  let, 

$$J_{(c,n)}(t)=\begin{vmatrix}
\frac{c_{2_{u}}(t)}{2} & \frac{c_{2_{v}}(t)}{2}+(n(t) \times c^{'}(t))_2\\
 \frac{c_{3_{u}}(t)}{2} & \frac{c_{3_{v}}(t)}{2}+ (n(t) \times c^{'}(t))_3
\end{vmatrix}$$

be a real-valued function defined on  $I$.

Fix a real analytic strip $(c,n)$ and let $\Omega_{(c,n)}$ be a  domain  where analytic extension of both $c$ and $n$ exists.

1) $t \rightarrow J_{(c,n)}(t)$ has a zero in $I$ $\implies$ there does not exist a solution for Bj\"orling problem of time like Born-Infeld soliton surfaces without singularities.

2) If $\Omega_{(c,n)}$ is convex, and if $$J_{(c,n)}(z)=\begin{pmatrix}
\frac{c_{2_u}}{2}+(Im(n(z) \times c^{'}(z))_2   &\frac{c_{2_v}}{2}+(Re(n(z) \times c^{'}(z))_2 \\
 \frac{c_{3_u}}{2}+(Im(n(z) \times c^{'}(z))_3 & \frac{c_{3_v}}{2}+(Re(n(z) \times c^{'}(z))_3)
\end{pmatrix}$$
 has a  non-vanishing determinant and is positive quasidefinite  for all $z$ in $\Omega_{(c,n)}$, then there is a timelike Born-Infeld soliton surface without singularities in $\mathbb{L}^3$ as solution  to the bj\"orling problem and it is given by 
$$X(z)=Re\{c(z)+ k'\int_{t_0}^{z} n(w) \times c^{'}(w) \,dw \}$$

if $c$ is timelike and,

$$X(w)=Re\{c(w)+ k'\int_{t_0}^{w} n(\zeta) \times c^{'}(\zeta) \,d\zeta \}$$

When $c$ is spacelike(Here $w=k^{'}z$).

\end{theorem}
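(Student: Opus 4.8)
The plan is to recast the whole statement as a question about invertibility of the projection of the Bj\"orling solution onto the $y$--$z$ plane. The unique timelike minimal surface realising the strip $(c,n)$ is the Bj\"orling surface
$$X(z)=\mathrm{Re}\Big\{c(z)+k'\int_{t_0}^{z}n(w)\times c'(w)\,dw\Big\},\qquad z=u+k'v,$$
and such a surface is a Born--Infeld soliton surface precisely when it is a graph $(\psi(y,z),y,z)$, i.e.\ when the projection $\Pi:=(X_2,X_3):\Omega_{(c,n)}\to\mathbb{R}^2$ is a diffeomorphism onto its image; in that case Lemma 3.1 forces $\psi:=X_1\circ\Pi^{-1}$ to be a Born--Infeld soliton. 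First I would differentiate the representation formula using $\partial_u z=1$, $\partial_v z=k'$ and $k'^2=1$, obtaining $X_u=\mathrm{Re}(c')+\mathrm{Im}(n\times c')$ and $X_v=\mathrm{Im}(c')+\mathrm{Re}(n\times c')$. Reading off the second and third components shows that the Jacobian matrix $D\Pi$ of $\Pi$ is exactly $J_{(c,n)}(z)$, and that along the curve (where $v=0$ and the $\mathrm{Im}$--terms vanish) its determinant is $J_{(c,n)}(t)$.

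For the first claim, suppose $J_{(c,n)}(t_0)=0$ for some $t_0\in I$. Then $D\Pi$ is singular at the curve point $(t_0,0)$, so $\Pi$ is not a local diffeomorphism near $c(t_0)$ and $X$ cannot be written as a smooth graph $(\psi(y,z),y,z)$ there. Since the Bj\"orling surface is the only timelike minimal surface carrying the strip $(c,n)$, and every regular Born--Infeld soliton surface is such a timelike minimal surface, no Born--Infeld soliton surface without singularities can solve the problem.

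For the second claim I would show that $\Pi$ is a global diffeomorphism onto its image in two steps. Nonvanishing of $\det J_{(c,n)}(z)$ gives, by the inverse function theorem, that $\Pi$ is everywhere a local diffeomorphism. Injectivity is the heart of the matter, and this is where convexity and positive quasidefiniteness enter, through the monotone--map argument underlying the notion from \cite{Dh}. For distinct $z_1,z_2\in\Omega_{(c,n)}$, set $\xi=z_2-z_1$ and $p(t)=z_1+t\xi$; convexity keeps $p(t)\in\Omega_{(c,n)}$, so the fundamental theorem of calculus gives $\langle\Pi(z_2)-\Pi(z_1),\xi\rangle=\int_0^1\langle J_{(c,n)}(p(t))\xi,\xi\rangle\,dt=\int_0^1\langle A(p(t))\xi,\xi\rangle\,dt$, where $A=\tfrac12(J_{(c,n)}+J_{(c,n)}^{T})$ is the symmetric part (the skew part contributes nothing to the quadratic form). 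Positive definiteness of $A$ makes the integrand strictly positive, so $\Pi(z_1)\neq\Pi(z_2)$ and $\Pi$ is injective. An injective local diffeomorphism is a diffeomorphism onto its open image, which yields the graph $X=(\psi(y,z),y,z)$ with $\psi$ a Born--Infeld soliton by Lemma 3.1; regularity of $c$ ensures, via \cite{Yw}, that there are no singularities, and the surface is the explicit Bj\"orling integral displayed in the statement. The spacelike case follows by the same argument after the substitution $w=k'z$, which swaps the timelike and spacelike directions and produces the second formula.

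The step I expect to be the main obstacle is the injectivity argument: one must first verify carefully that $J_{(c,n)}(z)$ really is the differential $D\Pi$ (keeping the real/imaginary split and the normalisation straight under the split-complex derivatives), and then correctly invoke the convex-domain global-invertibility principle for positive quasidefinite Jacobians. By comparison, identifying the height function as a Born--Infeld soliton is immediate from Lemma 3.1, and part (1) follows at once once the Jacobian has been identified.
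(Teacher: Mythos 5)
Your proposal is correct and follows the same overall route as the paper: differentiate the split-complex Bj\"orling representation of \cite{Rb}, identify the Jacobian of the projection $(u,v)\mapsto(y,z)$ with $J_{(c,n)}(z)$ (whose restriction to $I$ is $J_{(c,n)}(t)$), obtain part (1) from the failure of the inverse/implicit function theorem at a curve point, and obtain part (2) by promoting the local diffeomorphism to a global graph, the height function then being a Born--Infeld soliton by the zero mean curvature condition (Lemma 3.1). The one genuine difference is the injectivity step: the paper invokes Theorem 6 of Gale--Nikaido \cite{Dh} as a black box, while you re-prove it via the monotone-map argument, $\langle \Pi(z_2)-\Pi(z_1),\,\xi\rangle=\int_0^1\langle A(p(t))\xi,\xi\rangle\,dt>0$ with $\xi=z_2-z_1$, the skew part of $J_{(c,n)}$ contributing nothing to the quadratic form. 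This inline argument is valid and is essentially the proof of the cited theorem; what it buys is self-containedness and a clear view of where each hypothesis enters (convexity keeps the segment inside $\Omega_{(c,n)}$, positive quasidefiniteness makes the integrand strictly positive), whereas the paper's citation is shorter and connects to refinements in the literature such as the P-matrix condition of A.~Das noted in the Remark following Theorem 4.2. You also make explicit, in part (1), a step the paper leaves tacit: it is the uniqueness of the timelike Bj\"orling solution, together with the regularity of $X$ guaranteed by \cite{Yw}, that allows a singular Jacobian at a point of the curve to exclude \emph{every} graphical solution, rather than merely the particular parametrization $X$.
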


\begin{proof}

We present the proof for the case when $c$ is timelike. When $c$ is spacelike, the proof is similar.

The main idea of the proof is to  use the implicit function theorem to understand when does the timelike minimal surface solution to the Bj\" orling problem becomes a graph over the y-z plane. 

For Bj\"orling problem for timelike minimal surfaces,we know the solution is $$X(z)=Re\{c(z)+ k'\int_{t_0}^{z} n(w) \times c^{'}(w) \,dw \}$$(We refer to \cite{Rb}, page 485, theorem 3.1).

Let $$F(z)=c(z)+ k'\int_{t_0}^{z} n(w) \times c^{'}(w) \,dw $$.

Then $$X(z)=\frac{F(z)+\overline{F(z)}}{2}$$

$$\frac{\partial X}{\partial z}=\frac{1}{2}\frac{\partial F}{\partial z}=\frac{1}{2}(\frac{\partial c}{\partial z}+k'(n(z) \times c^{'}(z))).$$

We note that for split complex numbers,
$$\frac{\partial X}{\partial z}=\frac{1}{2}\left({\frac{\partial X}{\partial u}+k'\frac{\partial X}{\partial v}}\right).$$

Thus $$\frac{\partial X}{\partial u}=Re(2\frac{\partial X}{\partial z})=\frac{1}{2}\frac{\partial c}{\partial u}+Im(n(z) \times c^{'}(z))$$

$$\frac{\partial X}{\partial v}=Im(2\frac{\partial X}{\partial z})=\frac{1}{2}\frac{\partial c}{\partial v}+Re(n(z) \times c^{'}(z))$$

$$\begin{pmatrix}
X_u & X_v 
\end{pmatrix}=\begin{pmatrix}
\frac{c_u}{2}+Im(n(z) \times c^{'}(z) & \frac{c_v}{2}+ Re(n(z) \times c^{'}(z)
\end{pmatrix}.$$

Which implies,
$$\begin{pmatrix}
x_u & x_v\\
y_u & y_v \\
 z_u & z_v
\end{pmatrix} =\begin{pmatrix}
\frac{c_{1_u}}{2}+(Im(n(z) \times c^{'}(z))_1   &\frac{c_{1_v}}{2}+(Re(n(z) \times c^{'}(z))_1\\
\frac{c_{2_u}}{2}+(Im(n(z) \times c^{'}(z))_2   &\frac{c_{2_v}}{2}+(Re(n(z) \times c^{'}(z))_2 \\
 \frac{c_{3_u}}{2}+(Im(n(z) \times c^{'}(z))_3 & \frac{c_{3_v}}{2}+(Re(n(z) \times c^{'}(z))_3)
\end{pmatrix}.$$ 

Thus
$$\begin{pmatrix}
y_u & y_v \\
 z_u & z_v
\end{pmatrix} =\begin{pmatrix}
\frac{c_{2_u}}{2}+(Im(n(z) \times c^{'}(z))_2   &\frac{c_{2_v}}{2}+(Re(n(z) \times c^{'}(z))_2 \\
 \frac{c_{3_u}}{2}+(Im(n(z) \times c^{'}(z))_3 & \frac{c_{3_v}}{2}+(Re(n(z) \times c^{'}(z))_3)
\end{pmatrix}.$$ 

Let $$J_{(c,n)}(z)=\begin{vmatrix}
\frac{c_{2_u}}{2}+(Im(n(z) \times c^{'}(z))_2   &\frac{c_{2_v}}{2}+(Re(n(z) \times c^{'}(z))_2 \\
 \frac{c_{3_u}}{2}+(Im(n(z) \times c^{'}(z))_3 & \frac{c_{3_v}}{2}+(Re(n(z) \times c^{'}(z))_3)
\end{vmatrix}.$$ 

Thus whenever $J_{(c,n)}(z)$ is non vanishing, by implicit function theorem one can represent the surface as locally a graph over $y-z$ plane. The real valued function $t \rightarrow J_{(c,n)}(t)$ mentioned in theorem 

$$J_{(c,n)}(t)=\begin{vmatrix}
\frac{c_{2_{u}}(t)}{2} & \frac{c_{2_{v}}(t)}{2}+(n(t) \times c^{'}(t))_2\\
 \frac{c_{3_{u}}(t)}{2} & \frac{c_{3_{v}}(t)}{2}+ (n(t) \times c^{'}(t))_3
\end{vmatrix}$$

is  the restriction of this map $z \rightarrow J_{(c,n)}(z)$ to $I$. Using continuity arguments one can conclude that $J_{(c,n)}(z)$ non-vanishing in a some allowed domain $\Omega_{(c,n)}$ is equivalent to $J_{(c,n)}(t)$ non-vanishing on $I$. This completes the proof of the first part of the theorem.

Now if  $\psi: (u,v) \rightarrow (y(u,v),z(u,v)) $ is injective as well, it would become a diffeomorphism. In \cite{Dh} theorem 6 (Page 88) D. Gale and H.Nikaido shows that if $\Omega$ is a convex domain and if $\psi:\Omega \rightarrow \mathbb{R}^2$ has a positive quasidefinite Jacobian at all points, then $\psi$ is injective. The second condition ensures this.
\end{proof}

Now we would just state when can we get a spacelike minimal graph or a spacelike Born-Infeld soliton surface without singularities as the solution to the Bjorling problem. The proof is similar, except one has to use complex-numbers and complex analysis.

\begin{theorem}
Given a real analytic curve $c(t)=(c_1(t),c_2(t),c_3(t)) $  in $L^3$  and a real analytic timelike  normal $n=(n_1(t),n_2(t),n_3(t))$  let, 

$$J_{(c,n)}(t)=\begin{vmatrix}
\frac{c_{2_{u}}(t)}{2} & \frac{c_{2_{v}}(t)}{2}-(n(t) \times c^{'}(t))_2\\
 \frac{c_{3_{u}}(t)}{2} & \frac{c_{3_{v}}(t)}{2}- (n(t) \times c^{'}(t))_3
\end{vmatrix}$$

be a real-valued function defined on  $I$.

Fix a real analytic strip $(c,n)$ and let $\Omega_{(c,n)}$ be a  domain  where analytic extension of both $c$ and $n$ exists.

1) $t \rightarrow J_{(c,n)}(t)$ has a zero in $I$ $\implies$ there does not exist a solution for Bj\"orling problem of space like Born-Infeld soliton surfaces without singularities.

2) If $\Omega_{(c,n)}$ is convex, and if $$J_{(c,n)}(z)=\begin{pmatrix}
\frac{c_{2_u}}{2}+(Im(n(z) \times c^{'}(z))_2   &\frac{c_{2_v}}{2}-(Re(n(z) \times c^{'}(z))_2 \\
 \frac{c_{3_u}}{2}+(Im(n(z) \times c^{'}(z))_3 & \frac{c_{3_v}}{2}-(Re(n(z) \times c^{'}(z))_3)
\end{pmatrix}$$
 has a  non-vanishing determinant and is positive quasidefinite  for all $z$ in $\Omega_{(c,n)}$  then there is a  spacelike Born-Infeld general surface, without singularities in $\mathbb{L}^3$ as solution  to the Bj\"orling problem and it is given by 
$$X(z)=Re\{c(z)+ i\int_{t_0}^{z} n(w) \times c^{'}(w) \,dw \}$$

\end{theorem}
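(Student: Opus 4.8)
The plan is to follow the proof of Theorem 4.1 almost verbatim, replacing the split-complex apparatus (the unit $k'$ with $k'^{2}=1$) by ordinary complex analysis (the imaginary unit $i$ with $i^{2}=-1$). Here the normal $n$ is timelike, which forces the Björling surface to be spacelike rather than timelike, and the sign flips visible in the statement are precisely the bookkeeping consequence of $i^{2}=-1$. Concretely, I would first recall the representation formula for the solution of the Björling problem for spacelike minimal (i.e. maximal) surfaces in $\mathbb{L}^3$: for a real-analytic curve $c$ and a real-analytic timelike unit normal field $n$ admitting holomorphic analytic extensions on the convex domain $\Omega_{(c,n)}$, the unique solution is $X(z)=Re\{c(z)+i\int_{t_0}^{z} n(w)\times c'(w)\,dw\}$. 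This is the complex-analytic analogue of the formula borrowed from \cite{Rb} in the timelike case, and I would cite the corresponding maximal-surface Björling result in place of it.

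Next I would set $F(z)=c(z)+i\int_{t_0}^{z} n(w)\times c'(w)\,dw$, so that $X=\frac{1}{2}(F+\overline{F})$, and compute $X_u$ and $X_v$ from $F'(z)=c'(z)+i\,(n(z)\times c'(z))$ using the Wirtinger derivatives. Because $i^{2}=-1$, the real part of $F'$ picks up the $Im$ contribution in the $u$-column, while the $Re(n(z)\times c'(z))$ contribution enters the $v$-column with the \emph{opposite} sign to the timelike computation; this reproduces exactly the matrix $J_{(c,n)}(z)$ displayed in part (2). Deleting the first ($x$-) row leaves the Jacobian of the projection $(u,v)\mapsto(y,z)$, whose determinant is $J_{(c,n)}(z)$. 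Restricting to the real axis $v=0$, where $c$ and $n$ are real and hence $Im(n\times c')=0$, collapses this to the real-valued function $J_{(c,n)}(t)$ of the statement, so the two determinants agree on $I$.

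With this in hand the two parts run as in Theorem 4.1. For part (1), if $J_{(c,n)}(t_0)=0$ for some $t_0\in I$, the projection onto the $y$-$z$ plane is singular at the curve point $X(t_0)$; since the Björling solution is the unique spacelike minimal surface through the strip $(c,n)$, any spacelike Born-Infeld soliton surface $(\psi(y,z),y,z)$ solving the problem must coincide with it, and such a graph cannot have a singular projection onto its base plane, so no nonsingular solution exists. For part (2), non-vanishing of $J_{(c,n)}(z)$ gives, by the inverse function theorem, that $X$ is locally a $y$-$z$ graph everywhere; positive quasidefiniteness of $J_{(c,n)}(z)$ on the convex domain $\Omega_{(c,n)}$ then lets me invoke the Gale--Nikaido theorem (\cite{Dh}, Theorem 6) to upgrade this to global injectivity of $(u,v)\mapsto(y,z)$, hence a global diffeomorphism onto its image and a genuine graph $(\psi(y,z),y,z)$. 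Finally, by \cite{Rr} the height function of a spacelike minimal graph over the $y$-$z$ plane satisfies the Born-Infeld equation, so $\psi$ is a Born-Infeld soliton and $X$ is a spacelike Born-Infeld soliton surface without singularities.

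The step I expect to be the main obstacle is the first one: pinning down the correct maximal-surface Björling representation with the complex unit and then tracking signs so that the Wirtinger computation reproduces precisely the matrix in the statement --- in particular confirming that a \emph{timelike} normal yields a \emph{spacelike} immersion, and that the complex-extended Jacobian written there is exactly the object to which the Gale--Nikaido injectivity criterion must be applied. The remaining verifications (non-vanishing on $I$ and the quasidefiniteness bookkeeping) are routine once the representation and its real-axis restriction are set up correctly.
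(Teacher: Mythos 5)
Your proposal is correct and takes essentially the same route as the paper: the paper's own ``proof'' of this theorem is just the remark that it is similar to Theorem 4.1 with split-complex analysis replaced by ordinary complex analysis, and that is precisely the adaptation you carry out (maximal-surface Bj\"orling representation, Wirtinger computation of the $(u,v)\mapsto(y,z)$ Jacobian, implicit function theorem plus the Gale--Nikaido injectivity criterion on the convex domain, and the identification of the height function of a spacelike graph as a Born-Infeld soliton via \cite{Rr}). You in fact supply more detail than the paper itself does.
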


\begin{Remark}
 A. Das in \cite{Ar} show that the above positive-quasidefinite condition can be replaced with J(c,n)(t) being a P-matrix. 
\end{Remark}

\section{On solutions given by Barbishov and Chernikov }
 We refer to pages 617 to 619 of \cite{Gb} for the representation formula of Born-Infeld soliton surfaces given by Barbishov and Chernikov. We show that it holds at any non-zero Gauss curvature point.

\begin{theorem}
For any timelike Born infeld soliton surface $(\psi(y,z),y,z)$ without singularities, for any non-zero Gauss curvature point there is an open neighbourhood with  two $C^2$ functions $F$ and $G$ such that the surface can be represented there as,
$$y-z=F(r)-\int s^2 G^{'}(s) ds$$
$$y+z=G(s)-\int r^2 F^{'}(r) dr$$
$$\psi(y,z)=\int r F^{'}(r) dr+\int s G^{'}(s) ds.$$
Conversely, any graph-like surface $(\psi(y,z),y,z)$ represented this way is a Born-Infeld soliton surface.

\end{theorem}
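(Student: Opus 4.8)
The plan is to handle the two implications separately: the converse is a direct verification via the sum-of-lightlike-curves criterion, while the forward direction amounts to putting the surface into a canonical null (asymptotic) parametrization, and the hypothesis of non-vanishing Gauss curvature is precisely what legitimizes that parametrization.

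For the converse, I would first collect the $r$-dependent and $s$-dependent terms of the triple $(\psi,y,z)$ and write the surface as $X(r,s)=A(r)+B(s)$, where $A$ gathers the $\int rF'\,dr$, $\tfrac12 F(r)$ and $-\tfrac12\int r^2F'\,dr$ pieces and $B$ the analogous $G$-pieces. Differentiating gives $A'(r)=F'(r)\,\bigl(r,\ \tfrac12(1-r^2),\ -\tfrac12(1+r^2)\bigr)$ and $B'(s)=G'(s)\,\bigl(s,\ \tfrac12(1-s^2),\ \tfrac12(1+s^2)\bigr)$, and a one-line check shows the two bracketed direction vectors have zero norm for $dx^2+dy^2-dz^2$, so $A$ and $B$ are lightlike curves. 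Where $F'G'\neq 0$ their velocities are linearly independent, so by L. McNertney's theorem (cited via \cite{Lm}) the surface $X=A+B$ is timelike minimal. A short computation gives the Jacobian $\partial(y,z)/\partial(r,s)=\tfrac12 F'(r)G'(s)(1-r^2s^2)$, which is nonzero away from $rs=\pm1$, so the surface is locally a graph over the $y$--$z$ plane; being a timelike minimal graph over that plane, its height function $\psi$ is a Born-Infeld soliton by the result of Dey and Singh used already in Theorem~2.1. This settles the converse.

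For the direct statement I would start from the fact that a timelike Born-Infeld soliton surface is timelike minimal, hence locally admits null asymptotic coordinates in which $X_{rs}=0$; integrating yields $X=A(r)+B(s)$ with $A'$, $B'$ null and linearly independent. The goal is to reparametrize each null family so that $A'$ becomes proportional to the canonical direction $\ell_1(r)=\bigl(r,\tfrac12(1-r^2),-\tfrac12(1+r^2)\bigr)$ and $B'$ to $\ell_2(s)=\bigl(s,\tfrac12(1-s^2),\tfrac12(1+s^2)\bigr)$. Since $\ell_1$ sweeps out a stereographic chart of the circle of null directions of one fixed time-orientation, matching the projective direction of $A'$ to that of $\ell_1$ determines a candidate reparametrization $r\mapsto\tilde r$, after which $F'$ is defined as the scalar with $A'=F'\,\ell_1$ (and likewise $G'$ from $B'=G'\,\ell_2$). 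Integrating and separating the $r$- and $s$-dependent parts of $\psi$, $y$, $z$ then reproduces the three displayed formulas, with the integration constants absorbed into the base point; the $C^2$-regularity of $F$ and $G$ follows from smoothness of $X$ together with that of the reparametrization. I would also note in passing that $N\propto A'\times B'\propto \ell_1(r)\times\ell_2(s)$ is independent of $F,G$, matching the later remark that the $r$--$s$ normal fields coincide.

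The main obstacle I expect is justifying that this reparametrization is a genuine local diffeomorphism, i.e.\ that the projective null direction of $A'$ really moves with its parameter; if a null curve were a straight segment its direction would be constant and could not be matched to the moving $\ell_1$. This is exactly where $K\neq0$ enters: at a point of non-zero Gauss curvature the Gauss map is a local immersion, which is equivalent to each null asymptotic direction varying non-degenerately along its curve, hence to the direction-matching map having non-vanishing derivative. The technical heart of the proof is therefore the precise equivalence ``$K\neq0$ at $p$ $\iff$ both null directions are non-stationary near $p$ $\iff$ the canonical reparametrization is a local diffeomorphism,'' which also explains why the representation, like the Weierstrass--Enneper formula, degenerates exactly at the flat points. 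Once this is in hand the remaining steps are routine bookkeeping.
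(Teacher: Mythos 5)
Your proposal is sound in substance, but it proves the theorem by a genuinely different route than the paper. The paper's proof is essentially a citation plus one observation: it refers to Whitham \cite{Gb} (Section 17.15) for the Barbishov--Chernikov derivation, which is purely PDE-theoretic --- characteristic variables $\xi=y-z$, $\eta=y+z$, Riemann invariants $r,s$, and then a hodograph interchange of dependent and independent variables --- and the paper's only added content is that this interchange requires the map $(\xi,\eta)\mapsto(u,v)$ to be a local diffeomorphism, i.e.\ $\psi_{yy}\psi_{zz}-\psi_{yz}^2\neq 0$, which in the absence of singularities is exactly $K\neq 0$ since $K=(\psi_{yy}\psi_{zz}-\psi_{yz}^2)/(\psi_y^2-\psi_z^2+1)^2$; the converse is left inside the cited derivation. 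You instead argue intrinsically: for the converse you use the sum-of-lightlike-curves criterion of McNertney \cite{Lm} together with the Dey--Singh result \cite{Rr} (this is precisely the mechanism of the paper's Theorem 5.3, which the paper states separately but never uses to prove Theorem 5.1), and for the forward direction you pass to null coordinates, write $X=A(r)+B(s)$, and reparametrize each null family so that its velocity is proportional to the canonical curves $\ell_1,\ell_2$. Your key equivalence is correct: writing $A''=\alpha A'+\beta N$ (legitimate because $\{A'\}^{\perp}=\mathrm{span}\{A',N\}$ for a null $A'$ and spacelike $N$), one gets $K=\langle A'',N\rangle\langle B'',N\rangle/\bigl(-\langle A',B'\rangle^2\bigr)$, so $K\neq0$ exactly when neither null direction is stationary; this is the geometric face of the paper's hodograph-Jacobian condition. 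Your route buys a self-contained proof that explains geometrically why the formula degenerates at flat points, and it recovers Theorem 5.2 for free via $N\propto\ell_1(r)\times\ell_2(s)$; the paper's route buys brevity and a direct link to the classical source.

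Two patches are needed before your plan is airtight. First, $\ell_1$ and $\ell_2$ are stereographic charts, so each misses one projective null direction ($[(0,1,1)]$ for $\ell_1$ and $[(0,1,-1)]$ for $\ell_2$); your direction-matching map is undefined at a point where the $A$-family direction equals the direction missed by $\ell_1$. Since the two missed directions are distinct and the two null tangent directions at any point of a timelike surface are distinct, this is always repaired by choosing which characteristic family is labelled $r$ and which $s$ (a swap if necessary) and then shrinking the neighbourhood by continuity; you should say this explicitly, as the matching as you state it can genuinely fail pointwise. Second, in the converse, $F'G'\neq0$ alone does not give linear independence of the velocities: $\ell_1(r)$ and $\ell_2(s)$ become parallel when $rs=-1$, which is why your own Jacobian $\tfrac12 F'(r)G'(s)(1-r^2s^2)$ carries the factor $1-r^2s^2$. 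Either quote that Jacobian, or simply note, as the paper does in Theorem 5.3, that the surface is assumed graph-like, hence regular, so $X_r$ and $X_s$ are automatically independent.
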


\begin{proof}
We refer to \cite{Gb}, pages 617-619(Section 17.15) for the proof of the above representation formulae. Note the proof starts with the assumption $$\psi_{y}^2-\psi_{z}^2+1>0.$$ This implies the surface is timelike. Also Note that in the proof to go from step 17.89 to 17.90, there was an interchange of the roles of dependent and independent variables. For this, we want the map $$\psi :(\xi,\eta) \rightarrow (u,v)$$ to be a diffeomorphism. So  we need the Jacobian of this map $\psi$ to be nonzero. This implies, at such points $p$  $$(\psi_{yy} \psi_{zz}- \psi_{yz}^2)(p) \neq 0$$.

The above condition is equivalent to $\psi$ being a local-diffeomorphism. Note that Gauss curvature of $(\psi(y,z),y,z)$ is given by,
$$K(p)=\frac{\psi_{yy}\psi_{zz}-\psi_{yz}^2}{(\psi_y^2-\psi_z^2+1)^2}.$$
Thus if there are no singularities, $$(\psi_{yy} \psi_{zz}- \psi_{yz}^2)(p) \neq 0$$ is equivalent to saying that Gauss curvature $K(p)$ is nonzero.
\end{proof}

Now we state a result about their surface normal.

\begin{theorem}
The surface normal $N(r,s)$ (in $\mathbb{L}^3$)  of all  Born-Infeld soliton surfaces described by the representation formula of Barbishov and Chernikov are the same.
\end{theorem}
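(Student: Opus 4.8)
The plan is to compute the two tangent vectors $X_r$ and $X_s$ of the parametrized surface $X(r,s)=(\psi,y,z)$ directly from the Barbishov--Chernikov formulae and to show that, after differentiation, the arbitrary functions $F$ and $G$ survive only as overall scalar factors. Writing $A(r)=\int rF'(r)\,dr$, $C(r)=\int r^2F'(r)\,dr$, $B(s)=\int sG'(s)\,ds$ and $D(s)=\int s^2G'(s)\,ds$, I would first solve the linear system for $y$ and $z$, obtaining $y=\tfrac12(F-C+G-D)$ and $z=\tfrac12(-F-C+G+D)$, together with $\psi=A+B$. Differentiating and using $A'=rF'$, $C'=r^2F'$, $B'=sG'$, $D'=s^2G'$ then gives the clean factorisation
$$X_r=F'(r)\,\alpha(r),\qquad X_s=G'(s)\,\beta(s),$$
where $\alpha(r)=\bigl(r,\tfrac{1-r^2}{2},-\tfrac{1+r^2}{2}\bigr)$ and $\beta(s)=\bigl(s,\tfrac{1-s^2}{2},\tfrac{1+s^2}{2}\bigr)$ depend only on the parameter and \emph{not} on the choice of $F$ or $G$.

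The decisive observation is that the entire dependence on $F$ and $G$ has collapsed into the scalars $F'(r)$ and $G'(s)$, which are nonvanishing precisely by the no-singularity hypothesis (that is what keeps $X_r,X_s$ nondegenerate). Since rescaling each tangent vector by a nonzero scalar leaves the tangent plane $\mathrm{span}\{X_r,X_s\}=\mathrm{span}\{\alpha(r),\beta(s)\}$ unchanged, the normal line at the point with parameters $(r,s)$ is already determined by the universal vectors $\alpha(r)$ and $\beta(s)$. I would also remark that $\langle\alpha,\alpha\rangle_{\mathbb{L}^3}=\langle\beta,\beta\rangle_{\mathbb{L}^3}=0$, so that $r\mapsto\int X_r\,dr$ and $s\mapsto\int X_s\,ds$ are the two lightlike generating curves of McNertney's description; this is the conceptual reason the construction is rigid.

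To pin down the unit normal explicitly I would use the Lorentzian cross product $u\times_{\mathbb{L}^3}v=\bigl((u\times v)_1,(u\times v)_2,-(u\times v)_3\bigr)$ and compute
$$\alpha(r)\times_{\mathbb{L}^3}\beta(s)=\tfrac12(1+rs)\bigl(1-rs,\,-(r+s),\,-(r-s)\bigr).$$
A short check gives $\langle(1-rs,-(r+s),-(r-s)),(1-rs,-(r+s),-(r-s))\rangle_{\mathbb{L}^3}=(1+rs)^2>0$, confirming that the normal is spacelike (as it must be for a timelike surface) and yielding, after normalisation,
$$N(r,s)=\frac{1}{1+rs}\bigl(1-rs,\,-(r+s),\,-(r-s)\bigr),$$
manifestly independent of $F$ and $G$. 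Since $X_r\times_{\mathbb{L}^3}X_s=F'(r)G'(s)\,(\alpha\times_{\mathbb{L}^3}\beta)$, the computed unit normal agrees with this universal expression up to the sign $\operatorname{sgn}(F'(r)G'(s))$; fixing the standard orientation convention (equivalently, $F'>0$ and $G'>0$) removes this ambiguity and gives exact equality across all such surfaces.

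I expect no serious obstacle here, since the substance is the factorisation in the first step rather than any delicate estimate. The only points requiring care are (i) tracking where the no-singularity hypothesis enters, namely in guaranteeing $F'(r)\neq0$, $G'(s)\neq0$ and $1+rs\neq0$ (this last being exactly the locus where $\alpha$ and $\beta$ become parallel and the denominator of $N$ degenerates), and (ii) the orientation/sign bookkeeping needed to upgrade ``same normal line'' to ``same unit normal.''
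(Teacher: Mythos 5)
Your proposal is correct and follows essentially the same route as the paper: compute $X_r$ and $X_s$, note that $F$ and $G$ survive only through the scalar factors $F'(r)$ and $G'(s)$, and normalize the Lorentzian cross product to obtain a unit normal independent of $F$ and $G$ (your extra bookkeeping on the no-singularity hypothesis, the sign of $F'G'$, and the locus $1+rs=0$ only sharpens the paper's two-line computation). The only discrepancy is notational: your $N(r,s)=\frac{1}{1+rs}\bigl(1-rs,\,-(r+s),\,-(r-s)\bigr)$ agrees with the paper's $\bigl(\tfrac{r+s}{1+rs},\tfrac{r-s}{1+rs},\tfrac{rs-1}{1+rs}\bigr)$ up to overall sign and a cyclic reordering of components (the paper's formula lists the $\psi$-component last, whereas its own $X_r$, $X_s$ list it first), so yours is the version consistent with the coordinate order $(\psi,y,z)$ used throughout.
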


\begin{proof}
The proof follows by computation.

$$X_r=\left(rF^{'}(r), \hspace{0.25cm}\frac{F^{'}(r)(1-r^2)}{2}, \hspace{0.25cm}\frac{-F^{'}(r)(1+r^2)}{2}\right).$$
$$X_s=\left(sG^{'}(s),\hspace{0.25cm}\frac{G^{'}(s)(1-s^2)}{2},\hspace{0.25cm}\frac{G^{'}(s)(1+s^2)}{2}\right).$$

$$N(r,s)=\frac{X_r \times X_s}{|X_r \times X_s|}=\left(\frac{r+s}{1+rs},\hspace{0.25cm} \frac{r-s}{1+rs},\hspace{0.25cm} \frac{rs-1}{1+rs}\right).$$

Thus $N(r,s)$ is independent of F and G.
\end{proof}
Now we give a geometric interpretation of the above formula.

\begin{theorem}
Graphical surfaces $(\psi(y,z),y,z)$ with $\psi(y,z)$,$y$,$z$ as described by the representation formula of Barbishov and Chernikov can be written as,

$$X(r,s)=\frac{\psi(r)+\phi(s)}{2}$$

with $$\psi(r)=\left(2\int r F^{'}(r) dr, \hspace{0.25cm}F(r)-\int r^2 F^{'}(r) dr, \hspace{0.25cm}-F(r)-\int r^2 F^{'}(r) dr\right),$$

$$\phi(s)=\left(2\int s G^{'}(s) ds,\hspace{0.25cm}G(s)-\int s^2 G^{'}(s) ds, \hspace{0.25cm}G(s)+\int s^2 G^{'}(s) ds\right),$$

such that $\psi,\phi$ lightlike curves in $\mathbb{L}^3$ with $\psi^{'}(r)$ and $\phi^{'}(s)$ are linearly independent for all values of r and s. This implies these surfaces are timelike minimal.
\end{theorem}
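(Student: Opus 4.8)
The plan is to separate the argument into an algebraic part, namely the decomposition $X=\tfrac12(\psi(r)+\phi(s))$, and a geometric part, namely the two properties of the curves $\psi,\phi$, and then to invoke McNertney's criterion from \cite{Lm}. First I would recover the surface in the $(r,s)$ parameters directly from the representation formula proved above. Solving the two linear relations $y-z=F(r)-\int s^2G'(s)\,ds$ and $y+z=G(s)-\int r^2F'(r)\,dr$ for $y$ and $z$, and recalling the first coordinate $\psi(y,z)=\int rF'(r)\,dr+\int sG'(s)\,ds$, gives explicit expressions for all three coordinates of $X(r,s)$. Collecting in each coordinate the terms depending only on $r$ and the terms depending only on $s$ reproduces exactly $X(r,s)=\tfrac12(\psi(r)+\phi(s))$ with the stated curves. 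This is a routine rearrangement, which I would present as a direct check.

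Next I would differentiate to obtain $\psi'(r)=F'(r)\,(2r,\,1-r^2,\,-(1+r^2))$ and $\phi'(s)=G'(s)\,(2s,\,1-s^2,\,1+s^2)$, which are just $2X_r$ and $2X_s$ from the preceding theorem. The lightlike property is then a one-line computation in the metric $dx^2+dy^2-dz^2$: using $(1-r^2)^2-(1+r^2)^2=-4r^2$ one gets $\langle\psi'(r),\psi'(r)\rangle_{\mathbb{L}^3}=F'(r)^2\big(4r^2+(1-r^2)^2-(1+r^2)^2\big)=0$, and identically $\langle\phi'(s),\phi'(s)\rangle_{\mathbb{L}^3}=0$. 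For linear independence, since independence is a metric-free notion I would use the ordinary cross product and compute $\psi'(r)\times\phi'(s)=2F'(r)G'(s)(1+rs)\,(1-rs,\,-(r+s),\,r-s)$.

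The main obstacle is controlling the scalar factors in this cross product. The vector factor $(1-rs,\,-(r+s),\,r-s)$ never vanishes, since $r+s=r-s=0$ forces $r=s=0$, at which $1-rs=1$; and regularity of the surface forces $F'(r)\neq0$ and $G'(s)\neq0$, because $X_r=\tfrac12F'(r)(2r,1-r^2,-(1+r^2))$ has a direction vector that is never zero. The delicate factor is $1+rs$: the cross product degenerates exactly on the locus $rs=-1$, which is precisely where the unit normal $N=\big(\tfrac{r+s}{1+rs},\tfrac{r-s}{1+rs},\tfrac{rs-1}{1+rs}\big)$ of the preceding theorem is undefined. I would therefore argue that this locus lies outside the parameter domain: on the domain parametrizing a singularity-free timelike surface one has $X_r\times X_s\neq0$, hence $1+rs\neq0$ there automatically, so that the phrase \emph{for all $r$ and $s$} is to be read as \emph{for all $(r,s)$ in the parameter domain}.

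Once $\psi$ and $\phi$ are established to be lightlike curves with everywhere linearly independent velocities, the conclusion that $X$ is timelike minimal is immediate from McNertney's theorem in \cite{Lm}, which states that any surface in $\mathbb{L}^3$ that is the sum of two lightlike curves with linearly independent velocities is timelike minimal. I expect the only genuinely nontrivial point to be the justification of $1+rs\neq0$ on the domain; everything else is differentiation and a short Lorentzian inner-product computation.
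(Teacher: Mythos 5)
Your proposal is correct and follows essentially the same route as the paper: verify the decomposition $X(r,s)=\frac{\psi(r)+\phi(s)}{2}$ and the lightlike property by direct computation, deduce linear independence of $\psi^{'}(r)=2X_r$ and $\phi^{'}(s)=2X_s$ from regularity of the graphical surface, and conclude timelike minimality by citing the known criterion for surfaces that are sums of lightlike curves (the paper invokes Fact 2.2 of \cite{Sa} rather than \cite{Lm}, but with the same content). Your explicit cross-product computation isolating the factors $F^{'}(r)$, $G^{'}(s)$ and $1+rs$ is a more detailed rendering of the paper's one-line regularity argument, not a genuinely different method, since in the end you too derive the non-vanishing of these factors from the regularity hypothesis.
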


\begin{proof}
It follows from computation that $$X(r,s)=\frac{\psi(r)+\phi(s)}{2}$$ and $\psi,\phi$ are lightlike curves. To show $\psi^{'}(r)$ and $\phi^{'}(s)$ are linearly independent for all values of r and s, note that $\psi^{'}(r)=2X_r$ and $\phi^{'}(s)=2X_s$ .

$$X_r=\left(rF^{'}(r), \hspace{0.25cm}\frac{F^{'}(r)(1-r^2)}{2}, \hspace{0.25cm}\frac{-F^{'}(r)(1+r^2)}{2}\right).$$
$$X_s=\left(sG^{'}(s),\hspace{0.25cm}\frac{G^{'}(s)(1-s^2)}{2},\hspace{0.25cm}\frac{G^{'}(s)(1+s^2)}{2}\right).$$

Since $X(r,s)$ is a given to be the graph of a function, which is a regular surface, $X_r$ and $X_s$ are linearly independent for all values of $r$ and $s$. 

Fact 2.2 in \cite{Sa}(Page 541) confirms these surfaces are timelike minimal.
\end{proof}
\section*{Acknowledgement}
I would like to thank my MS thesis supervisor Prof. Rukmini Dey(ICTS Bangalore) and my friend Arka Das(IISC Bangalore) for all the interesting conversations we had during the project. I also thank Rukmini Dey for suggesting the problem to me. I want to express my gratitude to ICTS Bangalore for their hospitality. This work was done as a part of my master's thesis project in IISER Pune.

\end{document}